\documentclass[reqno]{amsart}
\title[A $q$-microscope for mock-theta patterns]{A $q$-microscope for mock-theta denominator patterns}
\usepackage{amssymb,amsmath,amsthm}
\usepackage[hidelinks]{hyperref}
\theoremstyle{definition}

\theoremstyle{plain}
\newtheorem{theorem}{Theorem}

\newtheorem{corollary}{Corollary}
\newtheorem{conjecture}[theorem]{Conjecture}

\theoremstyle{remark}

\newtheorem{remark}{Remark}

\newcommand{\Ck}{\mathcal{C}_k}
\newcommand{\Fk}{\mathcal{F}_k}
\newcommand{\Gk}{\mathcal{G}_k}
\newcommand{\Hk}{\mathcal{H}_k}
\newcommand{\T}{\mathcal{T}}
\newcommand{\Q}{\mathbb{Q}}
\newcommand{\fr}{\frac}

\numberwithin{equation}{section}

\begin{document}
\author[M. El Bachraoui]{Mohamed El Bachraoui}
\address{Dept. Math. Sci,
United Arab Emirates University, PO Box 15551, Al-Ain, UAE}
\email{melbachraoui@uaeu.ac.ae}


\keywords{$q$-series, basic hypergeometric series, mock theta functions, creative microscoping, $q$-supercongruences}
\subjclass[2020]{33D15, 11A07, 11B65}
\begin{abstract}
We give a systematic creative-microscoping treatment of truncated basic hypergeometric sums attached to three classical third-order mock theta denominator patterns, represented by $\phi(q)$, $\omega(q)$, and $\nu(q)$. The main conceptual point is that these denominator patterns naturally admit exact finite theta evaluations at $a=q^{\pm n}$, microscopic $q$-supercongruences, and cyclotomic/$p$-adic consequences within a single framework. For
\[
S(q,a):=\sum_{n\ge 0}\frac{q^{2n+1}(aq,q/a;q^2)_n}{(-q^2;q^2)_n},
\]
we prove, for odd $n$, a finite theta evaluation at $a=q^{\pm n}$, equivalently a microscopic $q$-supercongruence. We prove an analogous finite theta evaluation for the $\nu$-type denominator
\[
N_m(q,a):=\sum_{j=0}^{m} q^{2j+1}\frac{(aq,q/a;q^2)_j}{(-q;q^2)_{j+1}}.
\]
We then introduce the one-parameter denominator family
\[
\T(q;b):=\sum_{n\ge 0} q^{2n}\frac{(q;q^2)_n^2}{(q^2;q^2)_n(b;q^2)_n},
\]
whose specializations include the $\omega$-type shifted denominator ($b=q^3$) and the two-color family ($b=q^{2k}$). For its truncated $a$-extension we obtain an exact evaluation at $a=q^{\pm n}$, deduce microscopic congruences modulo $(a-q^n)(1-aq^n)$, and derive cyclotomic consequences, including zero congruences in the $q^{2k}$-subfamily and nontrivial $p$-adic versions for the cases $b=q$ and $b=q^3$.
\end{abstract}

\date{\textit{\today}}
\maketitle

\section{Introduction}

The purpose of this paper is to show that several classical mock-theta denominator patterns admit a common creative-microscoping treatment. For truncated basic hypergeometric sums attached to the $\phi$-, $\omega$-, and $\nu$-type denominators, we obtain exact evaluations at the odd specializations $a=q^{\pm n}$ and deduce microscopic $q$-supercongruences together with cyclotomic and $p$-adic consequences. Thus the point of view is that the arithmetic structure is carried by the denominator patterns themselves, rather than by isolated congruence examples.

The present paper is organized around three classical third-order mock theta denominator patterns:
\[
(-q^2;q^2)_n,
\qquad
(q;q^2)_{n+1},
\qquad
(-q;q^2)_{n+1}.
\]
These are represented, respectively, by Ramanujan's function
\begin{equation}\label{eq:phi-intro}
\phi(q):=\sum_{n\ge 0}\frac{q^{n^2}}{(-q^2;q^2)_n},
\end{equation}
the alternative series
\begin{equation}\label{eq:omega-intro}
\omega(q):=\sum_{n\ge 0}\frac{q^n}{(q;q^2)_{n+1}},
\end{equation}
and Watson's function
\begin{equation}\label{eq:nu-intro}
\nu(q):=\sum_{n\ge 0}\frac{q^{n(n+1)}}{(-q;q^2)_{n+1}}.
\end{equation}
The same three functions also appear naturally in the partition-theoretic work of Andrews, Dixit and Yee~\cite{andrews-dixit-yee}.

Our starting point is the $q$-hypergeometric series
\begin{equation}\label{eq:S}
S(q):=\sum_{n\ge 0}\frac{q^{2n+1}(q;q^2)_n^2}{(-q^2;q^2)_n},
\end{equation}
which recently arose in~\cite{andrews-elbachraoui, banerjee-etal} in connection with two-color partitions. Although~\eqref{eq:S} was originally motivated there by $\omega(q)$, its denominator places it more naturally in the $\phi$-type category~\eqref{eq:phi-intro}.
In this paper the main results are grouped by denominator pattern and the two-color material is treated later as a specialization. This organization is meant to emphasize that the two-color congruences arise from a broader mock-theta denominator framework rather than as isolated examples.

For completeness, we note that the first two denominator patterns also occur in McIntosh's second-order mock theta functions. In particular,
\begin{equation}\label{eq:ABmu-short}
\mu(q)=\sum_{n\ge 0}\frac{(-1)^n q^{n^2}(q;q^2)_n}{(-q^2;q^2)_n^2}
\end{equation}
again involves the factor $(-q^2;q^2)_n$; see~\cite{mcintosh-second-order}. We mention this only as background, since the exposition below is organized around the third-order trio~\eqref{eq:phi-intro}--\eqref{eq:nu-intro}.

Throughout, $(a;q)_n:=\prod_{j=0}^{n-1}(1-aq^j)$ denotes the $q$-Pochhammer symbol and $(a_1,\ldots,a_r;q)_n:=\prod_{i=1}^r(a_i;q)_n$.
For nonnegative integer $m$ and $n$ we use ${n\brack m}_q$ for the $q$-binomial coefficient given by
\[
{n\brack m} = {n \brack m}_q
=\begin{cases}
\frac{(q;q)_n}{(q;q)_m (q;q)_{n-m}}\ \text{if\ } n\geq m, \\
0\ \text{otherwise.}
\end{cases}
\]
Let
$\mathbb{Z}[q]$ denote the set of polynomials in $q$ with integer coefficients and let
$\mathbb{Z}(q)$ denote the set of rational functions in $q$ with integer coefficients.
The $n$-th cyclotomic polynomial is the polynomial in $\mathbb{Z}[q]$ given by
\[
\Phi_n(q) = \prod_{\substack{j=1 \\ \gcd(j,n)=1}}^n(q- \zeta^j) ,
\]
where $\zeta = e^{2\pi i/n}$ is the $n$-th root of unity.
Given polynomials $A_1(q), A_2(q), P(q) \in \mathbb{Z}[q]$, the congruence
$\fr{A_1(q)}{A_2(q)} \equiv 0 \pmod{P(q)}$ means that $P(q)$ divides $A_1(q)$ and that
$\gcd\big(P(q),A_2(q) \big) =1$; and in general for rational functions $A(q), B(q) \in\mathbb{Z}(q)$,
the congruence $A(q)\equiv B(q) \pmod{P(q)}$ means that $A(q)-B(q)\equiv 0\pmod{P(q)}$.

A guiding method in the subject is \emph{creative microscoping}, introduced by Guo and Zudilin~\cite{guo-zudilin}, which turns congruences modulo powers of cyclotomic polynomials into parameterized identities at roots of unity.
Recent developments in this direction include Jackson's transformations, Gasper’s Karlsson--Minton type summations,
squares and higher-fold basic hypergeometric series, and higher-power cyclotomic congruences; see, for example, \cite{guo-li-squares,guo-schlosser-threefam,maithy-barman,wei-jcta}.

From a historical point of view, this emphasis on roots of unity is not accidental. In modern language, Ramanujan’s original definition is already $q$-microscopic in spirit: it is a root-by-root comparison between a mock theta function and a local theta correction. Indeed, in his last letter to Hardy, Ramanujan requires that for each root of unity $\xi$ there exist a modular form $M_{\xi}(q)$ and a rational number $\alpha$ such that $F(q)-q^{\alpha}M_{\xi}(q)$ remains bounded as $q\to\xi$ radially, while no single modular form works uniformly at all roots of unity; see~\cite[pp.~220--223]{berndt-rankin} and the modern formulations in~\cite{rhoades-definition,bringmann-rolen}. Conversely, creative microscoping is built precisely on evaluating truncated basic hypergeometric series at roots of unity, and Guo and Zudilin explicitly observed that this perspective is close in spirit to the study of mock theta functions and quantum modular forms~\cite[p.~4]{guo-zudilin}. It is therefore striking that these two literatures have so rarely met directly. To the best of our knowledge, the classical mock theta series themselves have not previously been treated in the language of creative microscoping or cyclotomic $q$-supercongruences in any systematic way. Existing arithmetic work on mock theta functions has concentrated mainly on coefficient congruences and related partition congruences~\cite{andersen,andrews-passary-sellers-yee,waldherr}, while the root-of-unity side has largely been pursued through radial limits and quantum or antiquantum $q$-series
identities~\cite{bringmann-rolen,folsom-metacarpa,folsom-metacarpa-anti,folsom-ono-rhoades}. This historical parallel motivates the viewpoint adopted here: the classical mock-theta denominator patterns themselves naturally support microscopic congruences and exact finite theta evaluations. More specifically, the $\phi$- and $\nu$-type cases lead to explicit finite theta formulas at $a=q^{\pm n}$, while the one-parameter family $\T(q;b)$ simultaneously captures the shifted $\omega$-type case and the two-color $q^{2k}$-family, leading in turn to cyclotomic and $p$-adic consequences. In this sense, the paper is intended as a systematic creative-microscoping treatment of classical mock-theta denominator patterns rather than merely a collection of separate congruences.


Define the two-parameter series
\begin{equation}\label{eq:Sa}
S(q,a):=\sum_{n\ge 0}\frac{q^{2n+1}(aq,q/a;q^2)_n}{(-q^2;q^2)_n}
\end{equation}
and, for odd $n=2m+1$,
\[
S_m(q,a):=\sum_{j=0}^{m}\frac{q^{2j+1}(aq,q/a;q^2)_j}{(-q^2;q^2)_j}.
\]
Our first result is a finite theta evaluation for the $\phi$-type denominator.

\begin{theorem}\label{thm:finite-theta}
Let $n=2m+1$ be odd. Then
\begin{equation}\label{eq:finite-theta}
\sum_{j=0}^{m}\frac{q^{2j+1}(q^{1-n},q^{n+1};q^2)_j}{(-q^2;q^2)_j}
= (-1)^m q^{2m(m+1)+1}\sum_{r=-m}^{m}(-1)^r q^{-2r^2}.
\end{equation}
\end{theorem}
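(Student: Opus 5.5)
The plan is to prove \eqref{eq:finite-theta} by induction on $m$, using a creative-telescoping (WZ/Zeilberger-type) recurrence in $m$. Put $Q=q^2$ and write the left-hand side as $q\,L_m$ with
\[
L_m=\sum_{j=0}^{m}F(m,j),\qquad F(m,j)=Q^{j}\frac{(Q^{-m},Q^{m+1};Q)_j}{(-Q;Q)_j}.
\]
Because $(Q^{-m};Q)_j=0$ for $j>m$, the sum may be taken over all $j\ge 0$. Write the right-hand side as $q\,R_m$ with
\[
R_m=(-1)^mQ^{m(m+1)}\sum_{|r|\le m}(-1)^rQ^{-r^2}.
\]
Then
\[
R_m+R_{m-1}=(-1)^m\Bigl[(Q^{m(m+1)}-Q^{m(m-1)})\sum_{|r|\le m-1}(-1)^rQ^{-r^2}+2(-1)^mQ^{m}\Bigr].
\]
The base case $m=0$ is immediate, since both sides equal $q$. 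The inductive goal is therefore a first-order inhomogeneous recurrence of the form $L_m+L_{m-1}=(\text{explicit})$, or, more usefully, one of the form
\[
L_m-\alpha_m L_{m-1}=\beta_m
\]
whose right-hand side involves no theta sum. A convenient choice is $\alpha_m=-Q^{2m}$: then $R_m+Q^{2m}R_{m-1}$ collapses to the two boundary terms $r=\pm m$, giving
\[
R_m+Q^{2m}R_{m-1}=2Q^{m}.
\]
So the concrete target is the recurrence
\[
L_m+Q^{2m}L_{m-1}=2Q^{m}.
\]

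To establish this recurrence I would look for a certificate $G(m,j)$ satisfying
\[
F(m,j)+Q^{2m}F(m-1,j)=G(m,j+1)-G(m,j),
\]
with $G(m,0)=0$ and $G(m,m+1)$ (or the boundary value) equal to $2Q^m$. The ratio
\[
\frac{F(m-1,j)}{F(m,j)}=\frac{(1-Q^{j-m})(1-Q^{m})}{(1-Q^{-m})(1-Q^{m+j})}
\]
is rational in $Q^j$, so a Gosper-type ansatz applies:
\[
G(m,j)=\frac{(Q^{-m},Q^{m+1};Q)_j}{(-Q;Q)_{j-1}}\cdot\frac{c(Q^j)}{(1-Q^{m})},
\]
with $c$ a polynomial of low degree, most likely linear, in $Q^j$. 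The factor $(-Q;Q)_{j-1}$ is chosen so that the denominator $1+Q^j$ is absorbed when forming the difference. I would solve for $c$ by comparing coefficients in the Gosper equation, then check that the boundary term at $j=m+1$ produces exactly $2Q^m$. Here the useful evaluations are $(Q^{-m},Q^{m+1};Q)_{m+1}$-type products, which vanish, together with the surviving top term $(Q^{-m};Q)_m=(-1)^mQ^{-m(m+1)/2}(Q;Q)_m$.

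The main obstacle is finding the certificate, that is, confirming that a first-order recurrence with $\alpha_m=-Q^{2m}$ actually holds. If the Gosper step fails, I would use one of two fallbacks. The first is to let Zeilberger's algorithm choose $\alpha_m$, and possibly a second-order recurrence, and then verify that $R_m$ satisfies the same recurrence. The second is to reorganize $(Q^{-m},Q^{m+1};Q)_j=\prod_{i<j}(1-(Q^{-m}+Q^{m+1})Q^{i}+Q^{2i+1})$ and rewrite $L_m$ as a terminating ${}_3\phi_2$ with a factor $(Q;Q)_j/(Q;Q)_j$ inserted. One could then apply a Sears/Watson-type transformation, converting it to a sum that truncates to a finite theta series, which is the classical route to Hecke-type identities for $\phi(q)$.

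Once the recurrence is verified, both $L_m$ and $R_m$ satisfy it with the same initial value $L_0=R_0=1$. Hence $L_m=R_m$ for all $m$, and multiplying by $q$ gives \eqref{eq:finite-theta}.
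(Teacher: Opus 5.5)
Your reduction matches the paper's proof. The paper proves $L_m+q^{4m}L_{m-1}=2q^{2m+1}$, which is your recurrence $L_m+Q^{2m}L_{m-1}=2Q^m$ multiplied by $q$. It treats the theta side (collapse to $r=\pm m$) and the base case exactly as you do.

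The gap is that the recurrence for the left-hand side, the only nontrivial step, is never proved, and the ansatz you propose for it cannot work. Your ratio $F(m-1,j)/F(m,j)$ has the $j$-dependent denominator $1-Q^{m+j}$, and the certificate inherits it. With your prefactor $(Q^{-m},Q^{m+1};Q)_j/(-Q;Q)_{j-1}$, which vanishes at $j=m+1$, the telescoping relation determines $G(m,j)$ uniquely for $0\le j\le m$. Comparing with the actual certificate below, you would need $c(Q^j)=-Q^m(1-Q^m)^2/(1-Q^{m+j})$ for $0\le j\le m$. No polynomial of fixed degree $d<m$, in particular no linear one, satisfies this: $c(x)(1-Q^mx)+Q^m(1-Q^m)^2$ would have $m+1$ roots $1,Q,\dots,Q^m$ but is nonzero at $x=Q^{-m}$. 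Your boundary bookkeeping is also reversed. With that prefactor $G(m,m+1)=0$ automatically, so the inhomogeneous term must come from $-G(m,0)$, not from a "surviving top term".

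The fix is to replace $(Q^{m+1};Q)_j$ by $(Q^{m};Q)_j$ in the prefactor and take
\[
G(m,j)=-Q^{m}(1+Q^{j})\frac{(Q^{-m},Q^{m};Q)_j}{(-Q;Q)_j},
\]
which is $-q^{-1}$ times the paper's $G_{m,j}$. Use the three identities
\[
(Q^{m+1};Q)_j=(Q^{m};Q)_j\frac{1-Q^{m+j}}{1-Q^{m}},\qquad (Q^{1-m};Q)_j=(Q^{-m};Q)_j\frac{1-Q^{j-m}}{1-Q^{-m}},
\]
\[
(1+Q^j)-(1-Q^{j-m})(1-Q^{m+j})=Q^{j-m}\bigl(1+Q^m+Q^{2m}-Q^{m+j}\bigr).
\]
With these, both $F(m,j)+Q^{2m}F(m-1,j)$ and $G(m,j+1)-G(m,j)$ equal
\[
Q^{j}\frac{(Q^{-m},Q^{m};Q)_j}{(-Q;Q)_j}\bigl(1+Q^{m}+Q^{2m}-Q^{m+j}\bigr).
\]
Sum over $0\le j\le m$, using $F(m-1,m)=0$, $G(m,m+1)=0$ and $G(m,0)=-2Q^m$. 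This gives $L_m+Q^{2m}L_{m-1}=2Q^m$. With this certificate your argument is complete and coincides with the paper's proof, so the Zeilberger and Sears/Watson fallbacks are unnecessary.
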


\begin{corollary}\label{cor:microS}
Let $n=2m+1$ be odd. Then modulo $(a-q^n)(1-aq^n)$, we have
\begin{equation}\label{eq:micro-S}
\begin{aligned}
\sum_{j=0}^{m}\frac{q^{2j+1}(aq,q/a;q^2)_j}{(-q^2;q^2)_j}
&\equiv (-1)^m q^{2m(m+1)+1}\sum_{r=-m}^{m}(-1)^r q^{-2r^2}.\\
\end{aligned}
\end{equation}
\end{corollary}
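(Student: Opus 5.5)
Corollary~\ref{cor:microS} follows from Theorem~\ref{thm:finite-theta} by the standard creative-microscoping argument. Write $L(a)$ for the left-hand side of~\eqref{eq:micro-S} and $R$ for the right-hand side. Note that $R$ does not depend on $a$.

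\emph{Step 1: symmetry in $a$.} The factor $(aq,q/a;q^2)_j$ is invariant under $a\mapsto a^{-1}$, so $L(a)=L(a^{-1})$.

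\emph{Step 2: evaluation at both roots.} Setting $a=q^{-n}$ gives $(aq;q^2)_j=(q^{1-n};q^2)_j$ and $(q/a;q^2)_j=(q^{n+1};q^2)_j$. This is exactly the left-hand side of~\eqref{eq:finite-theta}, so $L(q^{-n})=R$ by Theorem~\ref{thm:finite-theta}. By Step 1 we also get $L(q^{n})=L(q^{-n})=R$.

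\emph{Step 3: divisibility in $a$.} Clear the powers of $a$ in the denominators by writing
\[
(q/a;q^2)_j=a^{-j}\prod_{i=0}^{j-1}(a-q^{2i+1}).
\]
Then $a^m\bigl(L(a)-R\bigr)$ is a polynomial in $a$ whose coefficients lie in $\mathbb{Z}(q)$. Its only denominators are products of factors $(-q^2;q^2)_j$ with $j\le m$, that is, factors $1+q^{2i}$. This polynomial vanishes at $a=q^n$ and at $a=q^{-n}$. These two values are distinct since $n\ge1$, so the polynomial is divisible by $(a-q^n)(a-q^{-n})$, or equivalently by $(a-q^n)(1-aq^n)$ up to a unit factor $-q^{-n}$. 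Since $a$ is coprime to both factors, we may divide by $a^m$. This gives the congruence~\eqref{eq:micro-S} in the polynomial ring in $a$ over $\mathbb{Z}(q)$.

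\emph{Step 4: the denominator condition.} To satisfy the paper's convention, we must check that the denominators $1+q^{2i}$ are coprime to $(a-q^n)(1-aq^n)$. This holds because the denominators are independent of $a$ while the modulus is monic-type in $a$. There is no real obstacle here.

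All the substance of the corollary therefore lies in Theorem~\ref{thm:finite-theta}, which is assumed. The only care needed is in Step 3: one must keep track of the $a^{-1}$ powers so that the factor theorem is applied to a genuine polynomial in $a$.
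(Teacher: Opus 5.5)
Your proposal is correct and follows the paper's proof essentially step for step. You evaluate at one root via Theorem~\ref{thm:finite-theta} and get the other from the $a\mapsto a^{-1}$ symmetry. You multiply by $a^m$ to obtain a genuine polynomial, apply the factor theorem, and convert the modulus using $(a-q^n)(a-q^{-n})=-q^{-n}(a-q^n)(1-aq^n)$. Your Step~4 check, that the $a$-free denominators $1+q^{2i}$ are coprime to the modulus, is a small extra point that the paper leaves implicit because it works over $\mathbb{Q}(q)[a]$.
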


The $\nu$-type analogue is obtained by replacing $(-q^2;q^2)_j$ with $(-q;q^2)_{j+1}$.

\begin{theorem}\label{thm:nu-finite-theta}
Let $n=2m+1$ be odd, and define
\[
N_m(q,a):=\sum_{j=0}^{m} q^{2j+1}\frac{(aq,q/a;q^2)_j}{(-q;q^2)_{j+1}}.
\]
Then
\begin{equation}\label{eq:nu-finite-theta}
(1+q^n)\sum_{j=0}^{m} q^{2j+1}
\frac{(q^{1-n},q^{n+1};q^2)_j}{(-q;q^2)_{j+1}}
=
(-1)^m q^{(m+1)(2m+1)}
\sum_{r=-m}^{m} (-1)^r q^{-2r^2-r}.
\end{equation}
\end{theorem}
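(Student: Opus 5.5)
Let $L_m$ denote the left-hand side of \eqref{eq:nu-finite-theta} without the factor $1+q^n$. I would prove the identity by induction on $m$, using a telescoping recursion in $m$, modelled on the proof of Theorem~\ref{thm:finite-theta}.

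\textbf{Step 1: rewrite the summand.} With $n=2m+1$, the numerator factor is
\[
(q^{-2m},q^{2m+2};q^2)_j=(-1)^j q^{j(j+1)-2mj}\frac{(q^2;q^2)_m(q^2;q^2)_{m+j}}{(q^2;q^2)_{m-j}(q^2;q^2)_m}.
\]
So the $j$-th term of $L_m$ is an explicit product of $q$-binomial type factors, and the sum is terminating in the base $q^2$.

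\textbf{Step 2: find a first-order recursion in $m$.} The right-hand side, denoted $R_m$, satisfies a simple relation: $R_m$ and $-q^{4m+3}R_{m-1}$ (up to the correct normalisation) differ only by the two new theta terms $r=\pm m$. These contribute
\[
(-1)^m q^{(m+1)(2m+1)}(-1)^m\bigl(q^{-2m^2-m}+q^{-2m^2+m}\bigr)=q^{2m+1}+q^{4m+1}.
\]
I would therefore try to prove
\[
(1+q^{2m+1})L_m+q^{4m}(1+q^{2m-1})L_{m-1}=q^{2m+1}+q^{4m+1}\quad(\text{or similar}),
\]
after checking the exact coefficient against $m=0$ and $m=1$. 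For instance, at $m=0$ we get $(1+q)\,q/(1+q)=q$, which matches $R_0=q$.

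To prove this recursion, I would seek a telescoping (Gosper/WZ-type) certificate. That is, find $G_m(j)$ with
\[
F_m(j)-cF_{m-1}(j)=G_m(j+1)-G_m(j),
\]
where $F_m(j)$ is the summand with the prefactor $1+q^n$. Since the ratio of consecutive terms in $j$ is rational, a natural ansatz is $G_m(j)=F_m(j)\cdot\rho(q^{2j})$, with $\rho$ of low degree, for example a ratio involving $1+q^{2j+1}$ and $1-q^{2m-2j+2}$. The boundary values $G_m(0)$ and $G_m(m+1)$ then produce the inhomogeneous term $q^{2m+1}+q^{4m+1}$.

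\textbf{Alternative route.} If finding the certificate stalls, I would instead apply the $q$-Chu--Vandermonde expansion or a terminating Sears/Andrews transformation. In the base $q^2$, the left side is $\frac{q}{1+q}\,{}_3\phi_2(q^{-2m},q^{2m+2},q^2;-q^3,0;q^2,q^2)$. Andrews' Bailey-pair machinery converts such sums at $a=q^{\pm n}$ into partial theta sums $\sum(-1)^rq^{-2r^2-r}$. Concretely, I would use the Bailey pair relative to $q^2$ arising from $(q^{-2m},q^{2m+2};q^2)_j$, which is the standard pair giving $\sum_{|r|\le m}$, and the denominator $(-q;q^2)_{j+1}$ then plays the role it plays for Watson's $\nu(q)$.

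\textbf{Main obstacle.} I expect the main obstacle to be pinning down the correct recursion coefficient and the rational certificate $\rho$. Once these are guessed from small cases $m=0,1,2$, verifying the telescoping identity is a routine rational-function check. The theorem then follows by induction from the base case $m=0$.
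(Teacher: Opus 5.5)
Your strategy is the paper's: a first-order inhomogeneous recurrence in $m$ for both sides, the left one obtained by summing a telescoping relation over $0\le j\le m$, then induction from $m=0$. Your inhomogeneous term $q^{2m+1}+q^{4m+1}$ is also correct. However, the proposal stops exactly where the proof begins. The certificate, which is the entire content of the argument, is never produced, and the recurrence you write down is wrong.

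Since $(m+1)(2m+1)-m(2m-1)=4m+1$, the theta side satisfies $R_m+q^{4m+1}R_{m-1}=q^{2m+1}(1+q^{2m})$, not a relation with $q^{4m+3}$ as in your Step~2. So the coefficient in your $L$-recurrence must be $q^{4m+1}$, not $q^{4m}$. Concretely, at $m=1$ the left side of the theorem equals $q^3+q^5-q^6$. The left side of your proposed relation is then $q^3+2q^5-q^6$, not $q^3+q^5$. Your sample factor $1-q^{2m-2j+2}$ for $\rho$ is also not the right one, so as it stands the inductive step is unproved. Two minor points: the exponent in Step~1 should be $j(j-1)-2mj$, and the Bailey-pair alternative is only a pointer, not an argument.

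The missing ingredient is the explicit certificate. Keep the factor $1+q^{2m+1}$ inside the summand, $F_{m,j}=(1+q^{2m+1})q^{2j+1}(q^{-2m},q^{2m+2};q^2)_j/(-q;q^2)_{j+1}$, and take
\[
G_{m,j}=q^{2m+1}(1+q^{2m})\frac{(q^{-2m},q^{2m};q^2)_j}{(-q;q^2)_j},
\qquad
\frac{G_{m,j}}{F_{m,j}}=\frac{q^{2m-2j}(1-q^{4m})(1+q^{2j+1})}{(1+q^{2m+1})(1-q^{2m+2j})}.
\]
Your factor $1+q^{2j+1}$ was right, but the other one is $1-q^{2m+2j}$ in the denominator, together with a factor $q^{-2j}$.

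On one side, use $1/(1-q^{-2m})=-q^{2m}/(1-q^{2m})$. On the other, use $(1+q^{2j+1})-(1-q^{2j-2m})(1-q^{2j+2m})=q^{2j-2m}(1+q^{2m+1}+q^{4m}-q^{2m+2j})$. Then both $F_{m,j}+q^{4m+1}F_{m-1,j}$ and $G_{m,j}-G_{m,j+1}$ reduce to
\[
q^{2j+1}\frac{(q^{-2m},q^{2m};q^2)_j}{(-q;q^2)_{j+1}}\,(1+q^{2m})\bigl(1+q^{2m+1}+q^{4m}-q^{2m+2j}\bigr).
\]
Sum over $0\le j\le m$, using $F_{m-1,m}=0=G_{m,m+1}$. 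This gives $(1+q^{2m+1})L_m+q^{4m+1}(1+q^{2m-1})L_{m-1}=G_{m,0}=q^{2m+1}(1+q^{2m})$ for $m\ge 1$, which is exactly the theta recurrence. With $(1+q)L_0=q=R_0$, your induction then closes.
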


\begin{corollary}\label{cor:nu-micro}
Let $n=2m+1$ be odd. Then modulo $(a-q^n)(1-aq^n)$, we have
\begin{equation}\label{eq:nu-micro}
(1+q^n)N_m(q,a)
\equiv
(-1)^m q^{(m+1)(2m+1)}
\sum_{r=-m}^{m} (-1)^r q^{-2r^2-r}.
\end{equation}
\end{corollary}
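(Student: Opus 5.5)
The plan is to deduce Corollary~\ref{cor:nu-micro} directly from Theorem~\ref{thm:nu-finite-theta}, using the usual creative-microscoping step from an evaluation to a congruence. We show that the difference of the two sides, viewed as a function of $a$, vanishes at both $a=q^n$ and $a=q^{-n}$. We then convert this vanishing into divisibility by $(a-q^n)(1-aq^n)$.

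\emph{Step 1: reduce to a polynomial in $a+1/a$.} For each $j$,
\[
(aq,q/a;q^2)_j=\prod_{i=0}^{j-1}(1-aq^{2i+1})(1-q^{2i+1}/a)=\prod_{i=0}^{j-1}\bigl(1+q^{4i+2}-q^{2i+1}x\bigr),
\qquad x:=a+a^{-1}.
\]
Hence
\[
D(x):=(1+q^n)N_m(q,a)-(-1)^m q^{(m+1)(2m+1)}\sum_{r=-m}^{m}(-1)^r q^{-2r^2-r}
\]
is a polynomial in $x$. Its coefficients lie in $\mathbb{Z}(q)$, and the only denominators are the factors $(-q;q^2)_{j+1}$, which do not involve $a$.

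\emph{Step 2: locate a root.} At $a=q^{\pm n}$ we have $x=q^n+q^{-n}$. Moreover $(q^{n}q,q/q^{n};q^2)_j=(q^{1-n},q^{n+1};q^2)_j$, and by the symmetry $a\leftrightarrow 1/a$ the same holds at $a=q^{-n}$. Theorem~\ref{thm:nu-finite-theta} therefore says exactly that $D(q^n+q^{-n})=0$.

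\emph{Step 3: factor and convert to a congruence.} Since $D(q^n+q^{-n})=0$, the linear factor $x-q^n-q^{-n}$ divides $D(x)$ in $\mathbb{Z}(q)[x]$. We have the identity
\[
(a-q^n)(1-aq^n)=-aq^n\bigl(a+a^{-1}-q^n-q^{-n}\bigr),
\]
and $aq^n$ is coprime to $(a-q^n)(1-aq^n)$. It follows that $D$ is divisible by $(a-q^n)(1-aq^n)$ as a Laurent polynomial in $a$ over $\mathbb{Z}(q)$.

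It remains to check that the denominators are coprime to the modulus, as the congruence convention of the paper requires. The denominators $(-q;q^2)_{j+1}$ are nonzero and independent of $a$, so they are invertible modulo $(a-q^n)(1-aq^n)$, viewed over $\mathbb{Z}(q)[a]$. This step, together with the bookkeeping that the factor $a^{-1}$ is harmless, is the only delicate point. Once it is in place, the congruence \eqref{eq:nu-micro} follows.
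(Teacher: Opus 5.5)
Your proposal is correct and is essentially the paper's argument: both deduce the congruence from Theorem~\ref{thm:nu-finite-theta} using the $a\leftrightarrow a^{-1}$ symmetry and the factor theorem. The only difference is cosmetic. You encode the symmetry through $x=a+a^{-1}$, so a single root $x=q^n+q^{-n}$ suffices, whereas the paper clears negative powers via $a^m\bigl(F_m(a)-\Theta_m(q)\bigr)$ and uses the two distinct roots $a=q^{\pm n}$; either way one gets divisibility by $(a-q^n)(1-aq^n)$ up to a unit of $\mathbb{Q}(q)[a,a^{-1}]$.
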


The second main object in the paper is the one-parameter denominator family
\begin{equation}\label{eq:T-infinite}
\T(q;b):=\sum_{n\ge 0} q^{2n}\frac{(q;q^2)_n^2}{(q^2;q^2)_n(b;q^2)_n}.
\end{equation}
This family contains, in particular, the shifted denominator $(q;q^2)_{n+1}$ occurring in~\eqref{eq:omega-intro} via the specialization $b=q^3$, and the two-color family of~\cite{andrews-elbachraoui} via $b=q^{2k}$.

For odd $n=2m+1$, define the truncated $a$-extension of~\eqref{eq:T-infinite} by
\begin{equation}\label{eq:Tmb}
\T_m(q,a;b):=\sum_{j=0}^{m} q^{2j}\frac{(aq,q/a;q^2)_j}{(q^2;q^2)_j(b;q^2)_j}.
\end{equation}
Our third theorem is an exact evaluation at $a=q^{\pm n}$.

\begin{theorem}\label{thm:free-b}
Let $n=2m+1$ be odd. Then
\begin{equation}\label{eq:Tmb-exact}
\T_m(q,q^{\pm n};b)=q^{2m(m+1)}\frac{(bq^{-n-1};q^2)_m}{(b;q^2)_m}.
\end{equation}
\end{theorem}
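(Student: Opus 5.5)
At $a=q^{n}$ (equivalently $a=q^{-n}$, since the summand is symmetric under $a\mapsto 1/a$) we have $(aq,q/a;q^2)_j=(q^{n+1},q^{1-n};q^2)_j$. Writing $Q=q^2$ and $n=2m+1$, this becomes $(Q^{m+1};Q)_j\,(Q^{-m};Q)_j$. The truncated sum is therefore the terminating basic hypergeometric series
\[
\T_m(q,q^{\pm n};b)=\sum_{j=0}^{m}\frac{(Q^{-m},Q^{m+1};Q)_j}{(Q,b;Q)_j}\,Q^{j}
={}_2\phi_1\!\left(\begin{matrix}Q^{-m},\,Q^{m+1}\\ b\end{matrix};Q,Q\right).
\]
The truncation at $j=m$ is automatic, because $(Q^{-m};Q)_j=0$ for $j>m$. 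So the finite sum is exactly a terminating ${}_2\phi_1$ with argument $Q$. The plan is to evaluate it with the $q$-Chu--Vandermonde summation in its second form:
\[
{}_2\phi_1(Q^{-m},B;C;Q,Q)=\frac{(C/B;Q)_m}{(C;Q)_m}\,B^{m}.
\]

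Take $B=Q^{m+1}$ and $C=b$. Then $C/B=bQ^{-m-1}=bq^{-2m-2}=bq^{-n-1}$, and $B^m=Q^{m(m+1)}=q^{2m(m+1)}$. This gives
\[
\T_m(q,q^{\pm n};b)=q^{2m(m+1)}\frac{(bq^{-n-1};q^2)_m}{(b;q^2)_m},
\]
which is exactly \eqref{eq:Tmb-exact}.

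The main thing to get right is which of the two $q$-Chu--Vandermonde forms applies. The other form has argument $cq^{n}/b$ and would not match argument $Q$, so I would confirm the correct form by checking the case $m=1$ directly. With $Q=q^2$, $m=1$ and $B=Q^2$, the left side is
\[
1+\frac{(1-Q^{-1})(1-Q^{2})}{(1-Q)(1-b)}\,Q
=1-\frac{Q(1+Q)}{1-b}\cdot\frac{1-Q}{Q}\cdot\frac{Q}{1-Q}\cdots
\]
Rather than expand this fully, I would simply compare it with $Q^2(1-bQ^{-2})/(1-b)=(Q^2-b)/(1-b)$. The direct computation gives $1-(1+Q)Q\cdot\frac{1}{1-b}\cdot\frac{1-Q}{1-Q}$, and this simplifies to $(1-b-Q-Q^2)/(1-b)$. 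That disagrees with the target, so I need to recompute carefully.

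Here $(1-Q^{-1})(1-Q^2)Q/(1-Q)=-(1-Q)(1+Q)(1-Q)/(1-Q)\cdot\ldots$. More carefully, $(1-Q^{-1})Q=Q-1$ and $(1-Q^2)/(1-Q)=1+Q$, so the $j=1$ term is $(Q-1)(1+Q)/(1-b)=(Q^2-1)/(1-b)$. The total is then $(1-b+Q^2-1)/(1-b)=(Q^2-b)/(1-b)$, which matches. This confirms the form of the summation used above.

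Finally, the identity should be read as an identity of rational functions in $b$ and $q$, so no restriction on $b$ is needed beyond $(b;q^2)_m\neq0$.
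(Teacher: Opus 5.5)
Your proof is correct and is essentially the paper's own argument: with $Q=q^2$ you recognize the sum at $a=q^{\pm n}$ as the terminating ${}_2\phi_1(Q^{-m},Q^{m+1};b;Q,Q)$ and apply the $q$-Chu--Vandermonde form ${}_2\phi_1(Q^{-m},B;C;Q,Q)=B^m(C/B;Q)_m/(C;Q)_m$ with $B=Q^{m+1}$, $C=b$, handling $a=q^{-n}$ by the $a\mapsto a^{-1}$ symmetry. The $m=1$ check is optional; in a final write-up, drop the garbled first attempt (the wrong value $(1-b-Q-Q^2)/(1-b)$ and the dangling $\cdots$) and keep only the clean computation giving $(Q^2-b)/(1-b)$.
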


\begin{corollary}\label{cor:Tmb-micro}
Let $n=2m+1$ be odd. Then modulo $(a-q^n)(1-aq^n)$, we have
\begin{equation}\label{eq:Tmb-micro}
\T_m(q,a;b)\equiv q^{2m(m+1)}\frac{(bq^{-n-1};q^2)_m}{(b;q^2)_m}.
\end{equation}
\end{corollary}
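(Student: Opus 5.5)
The corollary should follow from Theorem~\ref{thm:free-b} by the usual creative-microscoping step. The idea is to view the difference of the two sides as a polynomial in $x=a+a^{-1}$ that vanishes at the single point $x_0=q^n+q^{-n}$. Both specializations $a=q^n$ and $a=q^{-n}$ map to this point, and the modulus $(a-q^n)(1-aq^n)$ is, up to a unit, exactly $x-x_0$.

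First, I would rewrite the $a$-dependence of each summand of $\T_m(q,a;b)$ in terms of $x$. For each $i\ge 0$,
\[
(1-aq^{2i+1})(1-q^{2i+1}/a)=1-xq^{2i+1}+q^{4i+2},
\]
so
\[
(aq,q/a;q^2)_j=\prod_{i=0}^{j-1}\bigl(1-xq^{2i+1}+q^{4i+2}\bigr).
\]
This is a polynomial in $x$ of degree $j$ with coefficients in $\mathbb{Z}[q]$. Hence $\T_m(q,a;b)=P(x)$, where $P$ is a polynomial in $x$ of degree at most $m$. Its coefficients are rational functions in $q$ and $b$, and their denominators divide $(q^2;q^2)_m(b;q^2)_m$, which does not involve $a$.

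Next, let $R$ denote the right-hand side of \eqref{eq:Tmb-micro}, which is independent of $a$. Theorem~\ref{thm:free-b} gives $P(x_0)=R$ with $x_0=q^n+q^{-n}$, since $a=q^{\pm n}$ both give $x=x_0$. Therefore $P(x)-R$ is divisible by $x-x_0$ in the polynomial ring in $x$ over $\mathbb{Q}(q,b)$. Write $P(x)-R=(x-x_0)Q(x)$ with $\deg Q\le m-1$. Then
\[
(a-q^n)(1-aq^n)=-q^n a\,(x-x_0),
\]
so
\[
\T_m(q,a;b)-R=-q^{-n}a^{-1}(a-q^n)(1-aq^n)\,Q(a+a^{-1}).
\]
Here $a^{-1}Q(a+a^{-1})$ is a Laurent polynomial in $a$, and its coefficients have denominators built only from $(q^2;q^2)_m(b;q^2)_m$ and a power of $q$.

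The only point needing care is the meaning of the congruence, which is now modulo a polynomial in $a$ rather than the cyclotomic modulus defined in the introduction. I would interpret it in the standard creative-microscoping sense: the difference equals $(a-q^n)(1-aq^n)$ times a Laurent polynomial in $a$. The coefficients of that Laurent polynomial are rational in $q$ and $b$, and their denominators are coprime to $(a-q^n)(1-aq^n)$ because they do not involve $a$. The factorization above establishes exactly this, so the corollary follows. I expect no real obstacle; the substance lies in Theorem~\ref{thm:free-b}.
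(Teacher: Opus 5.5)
Your proposal is correct and is essentially the paper's argument: Theorem~\ref{thm:free-b} makes the difference of the two sides vanish at $a=q^{\pm n}$, and the factor theorem then gives divisibility by $(a-q^n)(1-aq^n)$ up to a unit. The only difference is cosmetic: you work in the symmetric variable $x=a+a^{-1}$, so the two roots collapse to the single root $x_0=q^n+q^{-n}$ and division by the monic $x-x_0$ visibly introduces no new denominators, whereas the paper clears negative powers with $a^m$ and uses the two distinct roots $q^{\pm n}$ in $a$.
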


\begin{remark}
Let $n=2m+1$ and define
\[
c_j(q,a;b):=q^{2j}\frac{(aq,q/a;q^2)_j}{(q^2;q^2)_j(b;q^2)_j}.
\]
Then
\[
\sum_{r=0}^{n-1}\sum_{j=0}^{r} c_j(q,q^{\pm n};b)c_{r-j}(q,q^{\pm n};b)
=
q^{4m(m+1)}\frac{(bq^{-n-1};q^2)_m^2}{(b;q^2)_m^2}.
\]
This is exactly the same convolution-to-square step used in
\cite[Lemma~1(a) and equation~(14)]{elbachraoui-squares}. Indeed,
$c_j(q,q^{\pm n};b)=0$ for $j>m$, so the left-hand side is
\[
\Bigl(\sum_{j=0}^{m} c_j(q,q^{\pm n};b)\Bigr)^2,
\]
and Theorem~\ref{thm:free-b} evaluates this square. By the same interpolation argument as in Corollary~\ref{cor:Tmb-micro}, one also gets
\[
\sum_{r=0}^{n-1}\sum_{j=0}^{r} c_j(q,a;b)c_{r-j}(q,a;b)
\equiv
q^{4m(m+1)}\frac{(bq^{-n-1};q^2)_m^2}{(b;q^2)_m^2}
\pmod{(a-q^n)(1-aq^n)}.
\]
\end{remark}

We next record two useful collections of specializations of Corollary~\ref{cor:Tmb-micro}; their proofs are postponed until Section~\ref{sec:proof-cor45}. The first corresponds directly to the $\phi$- and $\omega$-type denominator patterns discussed above.

\begin{corollary}\label{cor:mock-specializations}
Let $n=2m+1$ be odd. Then modulo $(a-q^n)(1-aq^n)$, we have
\begin{align}
\sum_{j=0}^{m} q^{2j}\frac{(aq,q/a;q^2)_j}{(q^2;q^2)_j}
&\equiv q^{2m(m+1)}, \label{eq:b0-micro}\\
(1-q^n)\sum_{j=0}^{m} q^{2j}\frac{(aq,q/a;q^2)_j}{(q^2;q^2)_j(q;q^2)_{j+1}}
&\equiv (-1)^m q^{m^2+2m}, \label{eq:q3-micro}\\
\sum_{j=0}^{m} q^{2j}\frac{(aq,q/a;q^2)_j}{(q^2;q^2)_j(-q^2;q^2)_j}
&\equiv q^{m(m+1)}. \label{eq:bminusq2-micro}
\end{align}
Setting $a=1$ yields the corresponding congruences modulo $\Phi_n(q)^2$.
\end{corollary}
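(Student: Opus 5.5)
The plan is to obtain all three congruences by specializing the free parameter $b$ in Corollary~\ref{cor:Tmb-micro}, then simplify the right-hand side $q^{2m(m+1)}(bq^{-n-1};q^2)_m/(b;q^2)_m$ by reversing the order of the factors in the finite product. Throughout, $n=2m+1$, so $q^{-n-1}=q^{-2m-2}$. Because the modulus $(a-q^n)(1-aq^n)$ involves $a$, a congruence between expressions with coefficients in $\mathbb{Z}(q)$ may be multiplied by any rational function of $q$ alone. This justifies the rescaling used below.

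\emph{Case $b=0$.} Here $(b;q^2)_j=1$ and $(bq^{-n-1};q^2)_m=1$, so \eqref{eq:b0-micro} is literally Corollary~\ref{cor:Tmb-micro}.

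\emph{Case $b=-q^2$.} We have $bq^{-n-1}=-q^{-2m}$, and
\[
(-q^{-2m};q^2)_m=\prod_{k=1}^{m}(1+q^{-2k})=q^{-m(m+1)}(-q^2;q^2)_m .
\]
The ratio therefore equals $q^{-m(m+1)}$, and the right-hand side becomes $q^{m(m+1)}$, which gives \eqref{eq:bminusq2-micro}.

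\emph{Case $b=q^3$.} Use $(q;q^2)_{j+1}=(1-q)(q^3;q^2)_j$. Then the left side of \eqref{eq:q3-micro} equals $\frac{1-q^n}{1-q}\,\T_m(q,a;q^3)$, where $\frac{1-q^n}{1-q}$ is a polynomial in $q$. On the right, $bq^{-n-1}=q^{1-2m}$, and
\[
(q^{1-2m};q^2)_m=\prod_{k\ \mathrm{odd},\,1\le k\le 2m-1}(1-q^{-k})=(-1)^m q^{-m^2}(q;q^2)_m .
\]
Also $(q^3;q^2)_m=(q;q^2)_{m+1}/(1-q)$. Hence the ratio is $(-1)^m q^{-m^2}(1-q)/(1-q^n)$. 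Multiplying $q^{2m(m+1)}$ times this ratio by $\frac{1-q^n}{1-q}$ gives $(-1)^m q^{m^2+2m}$, as claimed.

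\emph{Specialization $a=1$.} The only step needing care is the final sentence. At $a=1$ the modulus becomes $(1-q)(1-q^n)^2$, which is divisible by $\Phi_n(q)^2$. We must also check that no surviving denominator is divisible by $\Phi_n(q)$, recalling that $\Phi_n(q)\mid 1-q^N$ iff $n\mid N$.
\begin{itemize}
\item For $j\le m$, the factors of $(q^2;q^2)_j$ are $1-q^{2i}$ with $2i\le 2m<n$, so none is divisible by $\Phi_n(q)$.
\item Writing $1+q^{2i}=(1-q^{4i})/(1-q^{2i})$, the relevant exponents $4i\le 4m<2n$ satisfy $4i\neq n$ because $n$ is odd. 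So $\Phi_n(q)$ does not divide $(-q^2;q^2)_j$.
\item The factors of $(q;q^2)_{j+1}$ are $1-q^{2i+1}$ with $2i+1\le n$. Only the factor $1-q^n$ (at $j=m$) is divisible by $\Phi_n(q)$, and it is cancelled by the prefactor $(1-q^n)$ in \eqref{eq:q3-micro}.
\end{itemize}
This denominator bookkeeping is the main (mild) obstacle. Once it is done, the congruences modulo $\Phi_n(q)^2$ follow.
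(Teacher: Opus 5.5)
Your proposal is correct and follows the paper's proof essentially verbatim: you specialize $b=0$, $q^3$, $-q^2$ in Corollary~\ref{cor:Tmb-micro}, simplify $(bq^{-n-1};q^2)_m/(b;q^2)_m$ by reversing the finite products, and for $b=q^3$ rescale by $(1-q^n)/(1-q)$ using $(q;q^2)_{j+1}=(1-q)(q^3;q^2)_j$. Your denominator check at $a=1$ is more explicit than the paper's; it tacitly uses that dividing by the monic $(a-q^n)(a-q^{-n})$ keeps denominators prime to $\Phi_n(q)$. One harmless slip: at $a=1$ the modulus $(a-q^n)(1-aq^n)$ equals $(1-q^n)^2$, not $(1-q)(1-q^n)^2$, but this is still divisible by $\Phi_n(q)^2$, so the conclusion is unaffected.
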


The next pair of specializations will be used for the nontrivial $p$-adic versions in Section~\ref{sec:twocolor}.

\begin{corollary}\label{cor:aux-specializations}
Let $n=2m+1$ be odd. Then modulo $(a-q^n)(1-aq^n)$, we have
\begin{align}
\sum_{j=0}^{m} q^{2j}\frac{(aq,q/a;q^2)_j}{(q^2;q^2)_j(q;q^2)_j}
&\equiv (-1)^m q^{m^2}\frac{1-q^n}{1-q},\label{eq:bq-micro}\\
\sum_{j=0}^{m} q^{2j}\frac{(aq,q/a;q^2)_j}{(q^2;q^2)_j(-q;q^2)_j}
&\equiv q^{m^2}\frac{1+q^n}{1+q}.\label{eq:bminusq-micro}
\end{align}
Setting $a=1$ yields the corresponding congruences modulo $\Phi_n(q)^2$.
\end{corollary}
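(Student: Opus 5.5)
Both congruences come from specializing Corollary~\ref{cor:Tmb-micro}: take $b=q$ for \eqref{eq:bq-micro} and $b=-q$ for \eqref{eq:bminusq-micro}. With these choices the left-hand side of \eqref{eq:Tmb-micro} is literally the sum on the left of the claimed congruence. So the only work is to simplify the right-hand side
\[
q^{2m(m+1)}\frac{(bq^{-n-1};q^2)_m}{(b;q^2)_m}
\]
in each case, with $n=2m+1$.

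\emph{The case $b=q$.} Here $bq^{-n-1}=q^{-2m-1}$, and the numerator is
\[
(q^{-2m-1};q^2)_m=\prod_{k=1}^{m}\bigl(1-q^{-(2k+1)}\bigr).
\]
Pull out $-q^{-(2k+1)}$ from each factor. Since $\sum_{k=1}^m(2k+1)=m^2+2m$, this gives
\[
(-1)^m q^{-m^2-2m}\prod_{k=1}^{m}(1-q^{2k+1}).
\]
The denominator is $(q;q^2)_m=\prod_{k=0}^{m-1}(1-q^{2k+1})$. In the quotient everything cancels except $(1-q^{2m+1})/(1-q)=(1-q^n)/(1-q)$. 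Multiplying by $q^{2m^2+2m}$ gives $(-1)^m q^{m^2}(1-q^n)/(1-q)$, which is \eqref{eq:bq-micro}.

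\emph{The case $b=-q$.} The same computation works with plus signs. We have $(-q^{-2m-1};q^2)_m=q^{-m^2-2m}\prod_{k=1}^m(1+q^{2k+1})$. Dividing by $(-q;q^2)_m$ leaves $(1+q^n)/(1+q)$. This yields $q^{m^2}(1+q^n)/(1+q)$, which is \eqref{eq:bminusq-micro}.

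\emph{The specialization $a=1$.} Setting $a=1$ turns the modulus $(a-q^n)(1-aq^n)$ into $-(1-q^n)^2$, which is divisible by $\Phi_n(q)^2$. We must also check that every denominator involved is coprime to $\Phi_n(q)$. For $j\le m$, the factors of $(q^2;q^2)_j$ and $(q;q^2)_j$ are $1-q^e$ with $1\le e\le 2m<n$, so none of them vanishes at a primitive $n$-th root of unity. The factors $1+q^{2i+1}$ of $(-q;q^2)_j$, and also $1+q$, vanish only at roots of unity of even order. Since $n$ is odd, these are coprime to $\Phi_n(q)$ as well. Finally, $1-q$ is coprime to $\Phi_n(q)$ because $n>1$; for $n=1$, i.e.\ $m=0$, both sides equal $1$ and there is nothing to prove.

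The proof is therefore a direct substitution. The only place where care is needed is the exponent bookkeeping $\sum_{k=1}^m(2k+1)=m^2+2m$ and the tracking of signs. The coprimality requirements for the congruences modulo $(a-q^n)(1-aq^n)$ are inherited from Corollary~\ref{cor:Tmb-micro}.
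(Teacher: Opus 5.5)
Your proof is correct and is essentially the paper's argument. You specialize Corollary~\ref{cor:Tmb-micro} at $b=q$ and $b=-q$, simplify $(q^{-n};q^2)_m/(q;q^2)_m$ and $(-q^{-n};q^2)_m/(-q;q^2)_m$ exactly as the paper does, and use that $n$ is odd to keep the denominators coprime to $\Phi_n(q)$; your coprimality check is in fact more complete than the paper's. One harmless slip: at $a=1$ the modulus $(a-q^n)(1-aq^n)$ is $(1-q^n)^2$, not $-(1-q^n)^2$, which changes nothing.
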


In particular, Corollary~\ref{cor:mock-specializations} includes the shifted denominator $(q;q^2)_{n+1}$ via the specialization $b=q^3$, while the choice $b=q^{2k}$ recovers the two-color family
\[
\Fk(q)=\sum_{n\ge 0} q^{2n}\frac{(q;q^2)_n^2}{(q^2;q^2)_n(q^{2k};q^2)_n}.
\]
The case $k=1$ matches the prime-case congruence of Guo and Zeng~\cite[Theorem~1.6]{guo-zeng}, while $k\ge 2$ yields a family of zero congruences modulo $\Phi_n(q)^2$.

Section~\ref{sec:twocolor} records the two-color consequences and their $p$-adic versions. Sections~\ref{sec:proof-phi} and~\ref{sec:proof-nu} prove Theorem~\ref{thm:finite-theta} and Corollary~\ref{cor:microS}, and Theorem~\ref{thm:nu-finite-theta} and Corollary~\ref{cor:nu-micro}, respectively. Section~\ref{sec:proof-Tfamily} proves Theorem~\ref{thm:free-b} and Corollary~\ref{cor:Tmb-micro}, and Section~\ref{sec:proof-cor45} establishes Corollaries~\ref{cor:mock-specializations} and~\ref{cor:aux-specializations}. We conclude in Section~\ref{sec:outlook} with a broader conjectural problem for the remaining two-color families.

\section{Two-color consequences and \texorpdfstring{$p$}{p}-adic versions}\label{sec:twocolor}

We now record the two-color specialization of the denominator family and the resulting $p$-adic versions.

\begin{corollary}[Recovery of the $q^{2k}$-subfamily]\label{cor:Fk}
Let $k\in\mathbb{N}$ and let $n=2m+1$ be odd. Define
\begin{equation}\label{eq:Fmk}
\mathcal{F}_{m,k}(q,a):=\sum_{j=0}^{m} q^{2j}\frac{(aq,q/a;q^2)_j}{(q^2;q^2)_j(q^{2k};q^2)_j}.
\end{equation}
Then
\begin{equation}\label{eq:Fk-micro-new}
\mathcal{F}_{m,k}(q,a)
\equiv q^{2m(m+1)}\frac{(q^{2k-n-1};q^2)_m}{(q^{2k};q^2)_m}
\pmod{(a-q^n)(1-aq^n)}.
\end{equation}
If $\Phi_n(q)$ is coprime to $(q^{2k};q^2)_m$ (for instance, whenever $1\le k\le m+1$), then
\begin{equation}\label{eq:Fk-phi2-new}
\sum_{j=0}^{m} q^{2j}\frac{(q;q^2)_j^2}{(q^2;q^2)_j(q^{2k};q^2)_j}
\equiv q^{2m(m+1)}\frac{(q^{2k-n-1};q^2)_m}{(q^{2k};q^2)_m}
\pmod{\Phi_n(q)^2}.
\end{equation}
\end{corollary}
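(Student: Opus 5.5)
The first congruence \eqref{eq:Fk-micro-new} is just Corollary~\ref{cor:Tmb-micro} with $b=q^{2k}$. The definition \eqref{eq:Fmk} coincides with $\T_m(q,a;q^{2k})$ from \eqref{eq:Tmb}, and the right-hand side of \eqref{eq:Tmb-micro} becomes $q^{2m(m+1)}(q^{2k-n-1};q^2)_m/(q^{2k};q^2)_m$. One point needs care: the congruence modulo $(a-q^n)(1-aq^n)$ requires the denominators to be coprime to this modulus. Here the denominators $(q^2;q^2)_j(q^{2k};q^2)_j$ and $(q^{2k};q^2)_m$ do not involve $a$, so this is automatic in the ring of rational functions in $q$ with polynomial dependence on $a$.

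\textbf{Passing to $a=1$.} The plan is the standard creative-microscoping step. Write the difference
\[
D(a):=\mathcal{F}_{m,k}(q,a)-q^{2m(m+1)}\frac{(q^{2k-n-1};q^2)_m}{(q^{2k};q^2)_m}.
\]
Multiplied by the common denominator $d(q):=(q^2;q^2)_m(q^{2k};q^2)_m$, it becomes a Laurent polynomial in $a$ and $q$. It is in fact a polynomial in $a+a^{-1}$, because $(aq,q/a;q^2)_j=\prod_{i}(1-(a+a^{-1})q^{2i+1}+q^{4i+2})$. By \eqref{eq:Fk-micro-new} it vanishes at $a=q^{n}$ and at $a=q^{-n}$, so it is divisible by $(a-q^n)(1-aq^n)$ (up to a power of $a$). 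Setting $a=1$ turns this factor into $(1-q^n)^2$, which is divisible by $\Phi_n(q)^2$. Hence $d(q)D(1)$ is a polynomial in $q$ divisible by $\Phi_n(q)^2$.

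\textbf{Dividing by the denominator.} It remains to check $\gcd(\Phi_n(q),d(q))=1$, which makes the congruence meaningful and lets us divide. For $(q^2;q^2)_m$, the factors are $1-q^{2i}$ with $2i\le 2m<n$, so none is divisible by $\Phi_n$. The factor $(q^{2k};q^2)_m$ is coprime to $\Phi_n$ by hypothesis. For the example $1\le k\le m+1$, the exponents $2k+2i$ with $0\le i\le m-1$ lie in $[2,4m]$, are even, and $n$ is odd. Such an exponent is a multiple of $n$ only if it equals $2n=4m+2>4m$, which is impossible. So $\Phi_n\nmid 1-q^{2k+2i}$. Dividing by $d(q)$ then gives \eqref{eq:Fk-phi2-new}.

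\textbf{Main obstacle.} The only delicate step is the bookkeeping that the $a$-specialization commutes with the division by denominators. Since the denominators are free of $a$, this causes no real difficulty. I would also point out that the denominator $(q^{2k};q^2)_m$ on the right-hand side is covered by the same coprimality hypothesis.
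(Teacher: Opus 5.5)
Your proposal is correct and follows the paper's route: specialize $b=q^{2k}$ in Corollary~\ref{cor:Tmb-micro}, then set $a=1$. Your extra bookkeeping makes explicit what the paper's one-line proof leaves implicit: you clear the $a$-free denominator $(q^2;q^2)_m(q^{2k};q^2)_m$ before specializing, and you check that the exponents satisfy $2k+2i\le 4m<2n$, so they avoid multiples of $n$ when $1\le k\le m+1$.
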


\begin{proof}
This is the specialization $b=q^{2k}$ of Corollary~\ref{cor:Tmb-micro}, followed by $a=1$ for the cyclotomic form.
\end{proof}

\begin{corollary}[$p$-adic specialization of the $q^{2k}$-subfamily]\label{cor:padic-k}
Let $p$ be an odd prime, $m=(p-1)/2$, and let $k\ge 1$ be an integer. Then
\begin{equation}\label{eq:padic}
\sum_{r=0}^{m}\frac{\binom{2r}{r}^2}{16^r\binom{k+r-1}{r}}
\equiv
\begin{cases}
(-1)^m & (k=1),\\
0 & (k\ge 2 \text{ and } p\ge 2k-1),
\end{cases}
\pmod{p^2}.
\end{equation}
\end{corollary}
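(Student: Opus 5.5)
The plan is to specialize Corollary~\ref{cor:Fk} to $n=p$, $m=(p-1)/2$, and then pass to the limit $q\to1$. The principle used is the standard one: suppose $A(q)/B(q)\equiv C(q)/D(q)\pmod{\Phi_p(q)^2}$ with $A,B,C,D\in\mathbb{Z}[q]$ and $B(1)D(1)\not\equiv0\pmod p$. Then $AD-BC=\Phi_p(q)^2U(q)$ for some $U\in\mathbb{Z}[q]$; this uses Gauss's lemma and the fact that $\Phi_p$ is monic. Evaluating at $q=1$ and using $\Phi_p(1)=p$ gives $A(1)/B(1)\equiv C(1)/D(1)\pmod{p^2}$.

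The preliminary step is to rewrite each summand of \eqref{eq:Fk-phi2-new} so that it has no zero or pole at $q=1$. I would divide numerator and denominator by $(1-q)^{2j}$, writing
\[
(q;q^2)_j=(1-q)^j\prod_{i=1}^{j}[2i-1]_q,\quad (q^2;q^2)_j=(1-q)^j\prod_{i=1}^{j}[2i]_q,\quad (q^{2k};q^2)_j=(1-q)^j\prod_{i=1}^{j}[2k+2i-2]_q .
\]
The summand then becomes $q^{2j}\prod_{i=1}^{j}[2i-1]_q^2\big/\bigl(\prod_{i=1}^{j}[2i]_q[2k+2i-2]_q\bigr)$. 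Since $\Phi_p(q)\mid[N]_q$ exactly when $p\mid N$, the denominators are coprime to $\Phi_p$ and have $p$-adic unit values at $q=1$ provided no index $2i$ or $2k+2i-2$ with $i\le m$ is divisible by $p$. This holds because these indices are even and at most $2k+p-3<2p$ under $p\ge2k-1$ (for $k=1$ they are at most $p-1$). At $q=1$ the summand equals
\[
\frac{((2j-1)!!)^2}{2^j j!\cdot 2^j k(k+1)\cdots(k+j-1)}=\frac{\binom{2j}{j}^2}{16^j\binom{k+j-1}{j}},
\]
which is exactly the left side of \eqref{eq:padic}.

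Next I evaluate the right side $q^{2m(m+1)}(q^{2k-p-1};q^2)_m/(q^{2k};q^2)_m$. For $k=1$, the numerator $(q^{1-p};q^2)_m=\prod_{i=1}^{m}(1-q^{-2i})$ equals $(-1)^m q^{-m(m+1)}(q^2;q^2)_m$. The right side is therefore $(-1)^m q^{m(m+1)}$, which tends to $(-1)^m$. For $k\ge2$, the factors of $(q^{2k-p-1};q^2)_m$ are $1-q^{2k-p-1+2i}$ for $0\le i\le m-1$. The exponents run over the even integers from $2k-p-1$ to $2k-4$, so this range contains $0$ precisely when $k\le m+1$, that is, when $p\ge 2k-1$. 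In that case the right side is identically zero, and the limit is $0$.

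The main obstacle is the bookkeeping in the first step. One must check that after cancelling the powers of $(1-q)$, every remaining denominator factor is coprime to $\Phi_p(q)$ and has a unit value at $q=1$. One must also check that the hypothesis ``$\Phi_p$ coprime to $(q^{2k};q^2)_m$'' of Corollary~\ref{cor:Fk} is satisfied, which it is since $k\le m+1$. The boundary case $2k+2m-2=2k+p-3$ needs care, and this is where the condition $p\ge 2k-1$ is used. Everything else is a direct specialization.
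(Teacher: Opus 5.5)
Your proposal is correct and follows the same route as the paper: specialize Corollary~\ref{cor:Fk} at $n=p$, let $q\to1$ using $\Phi_p(1)=p$, and read off $(-1)^m$ for $k=1$ and the vanishing factor $1-q^0$ for $k\ge2$, $p\ge 2k-1$. Your extra checks make rigorous the limiting step that the paper states in one line. These are the cancellation of the powers of $1-q$, the verification that every remaining denominator is coprime to $\Phi_p(q)$ with a $p$-adic unit value at $q=1$, and the integral-quotient argument via the monic $\Phi_p(q)$.
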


\begin{proof}
As $q\to 1$, one has the standard asymptotics
\[
\frac{(q;q^2)_r}{(q^2;q^2)_r}\to \prod_{j=1}^{r}\frac{2j-1}{2j}=\frac{\binom{2r}{r}}{4^r},
\qquad
\frac{(q^2;q^2)_r}{(q^{2k};q^2)_r}\to \frac{1}{\binom{k+r-1}{r}}.
\]
Thus the left-hand side of~\eqref{eq:Fk-phi2-new} tends to the sum on the left-hand side of~\eqref{eq:padic}. Since $\Phi_p(1)=p$, the modulus $\Phi_p(q)^2$ becomes $p^2$ in the limit.

For $k=1$, the right-hand side of~\eqref{eq:Fk-phi2-new} tends to $(-1)^m$, yielding the first case of~\eqref{eq:padic}. For $k\ge 2$ and $p\ge 2k-1$, the factor $(q^{2k-p-1};q^2)_m$ contains $1-q^0$ and hence vanishes; the stated zero congruence follows.
\end{proof}

\begin{corollary}\label{cor:padic-q}
Let $p$ be an odd prime. Then
\begin{equation}\label{eq:padic-q}
\sum_{r=0}^{(p-1)/2}\frac{\binom{2r}{r}}{4^r}
\equiv (-1)^{(p-1)/2}p \pmod{p^2}.
\end{equation}
\end{corollary}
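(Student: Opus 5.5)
The plan is to specialize the $b=q$ case of Corollary~\ref{cor:aux-specializations}, namely~\eqref{eq:bq-micro}, to $a=1$ and $n=p$, and then let $q\to1$, in the same spirit as the proof of Corollary~\ref{cor:padic-k}.

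\emph{Step 1: specialize.} With $a=1$ and $n=p=2m+1$, the summand in~\eqref{eq:bq-micro} simplifies by cancelling one factor $(q;q^2)_j$:
\[
q^{2j}\frac{(q;q^2)_j^2}{(q^2;q^2)_j(q;q^2)_j}=q^{2j}\frac{(q;q^2)_j}{(q^2;q^2)_j}.
\]
This gives
\[
\sum_{j=0}^{m} q^{2j}\frac{(q;q^2)_j}{(q^2;q^2)_j}\equiv(-1)^m q^{m^2}\frac{1-q^p}{1-q}\pmod{\Phi_p(q)^2}.
\]
Passing from the modulus $(a-q^p)(1-aq^p)$ to $\Phi_p(q)^2$ at $a=1$ is legitimate here. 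At $a=1$ the modulus becomes $(1-q^p)^2$, which is divisible by $\Phi_p(q)^2$. The denominators $(q^2;q^2)_j(q;q^2)_j$ with $j\le m$ only involve factors $1-q^i$ with $1\le i\le 2m=p-1$, and each of these is coprime to $\Phi_p(q)$.

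\emph{Step 2: clear denominators.} Rewrite each ratio as
\[
\frac{(q;q^2)_j}{(q^2;q^2)_j}=\prod_{i=1}^{j}\frac{[2i-1]_q}{[2i]_q},
\]
where $[k]_q=(1-q^k)/(1-q)$. Set $D(q)=\prod_{i=1}^{m}[2i]_q\in\mathbb{Z}[q]$ and
\[
N(q)=D(q)\Bigl(\sum_{j=0}^m q^{2j}\frac{(q;q^2)_j}{(q^2;q^2)_j}-(-1)^m q^{m^2}[p]_q\Bigr).
\]
Then $N(q)\in\mathbb{Z}[q]$, and by Step~1, $\Phi_p(q)^2$ divides $N(q)$. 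Since $\Phi_p$ is monic, the quotient $N(q)/\Phi_p(q)^2$ also lies in $\mathbb{Z}[q]$.

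\emph{Step 3: evaluate at $q=1$.} We have $\Phi_p(1)=p$, so $N(1)\in p^2\mathbb{Z}$. We also have $D(1)=\prod_{i=1}^m 2i=2^m m!$, which is prime to $p$. At $q=1$ each summand becomes $\prod_{i\le j}\frac{2i-1}{2i}=\binom{2j}{j}/4^j$, and $[p]_1=p$. Dividing $N(1)$ by the $p$-adic unit $D(1)$ therefore yields~\eqref{eq:padic-q}.

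The main obstacle is Step~3, that is, making the limit $q\to1$ rigorous. Individual Pochhammer factors vanish at $q=1$, so one cannot simply substitute. This is why the argument clears denominators with the $q$-integers $[k]_q$, which take nonzero values at $q=1$, and checks that these values are prime to $p$.
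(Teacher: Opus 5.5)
Your proposal is correct and takes the same route as the paper: specialize the $b=q$ case of Corollary~\ref{cor:aux-specializations} at $a=1$, $n=p$, then let $q\to 1$. Clearing denominators with $D(q)=\prod_{i=1}^{m}[2i]_q$, whose value $2^m m!$ at $q=1$ is prime to $p$, makes rigorous the limiting step that the paper states only informally.
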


\begin{proof}
Let $m=(p-1)/2$. Specializing Corollary~\ref{cor:aux-specializations} to $a=1$ and $n=p$ gives
\[
\sum_{r=0}^{m} q^{2r}\frac{(q;q^2)_r}{(q^2;q^2)_r}
\equiv (-1)^m q^{m^2}\frac{1-q^p}{1-q}
\pmod{\Phi_p(q)^2}.
\]
Letting $q\to 1$, the left-hand side tends to the sum on the left-hand side of~\eqref{eq:padic-q}, while the right-hand side tends to $(-1)^m p$. As before, $\Phi_p(1)=p$, so the modulus $\Phi_p(q)^2$ becomes $p^2$.
\end{proof}

\begin{remark}\label{rem:nu-renormalized}
The nontrivial limits in Corollaries~\ref{cor:padic-k} and~\ref{cor:padic-q} may be viewed as renormalized $q\to 1$ shadows of the $\nu$-denominator finite-theta evaluation. Indeed, multiplying the $j$th summand of $N_m(q,a)$ by
\[
q^{-1}\frac{(-q;q^2)_{j+1}}{(q^2;q^2)_j^2}
\qquad\text{or}\qquad
q^{-1}\frac{(-q;q^2)_{j+1}}{(q^2;q^2)_j(q;q^2)_j}
\]
transforms it into the $k=1$ case of~\eqref{eq:Fmk} or the $b=q$ specialization of~\eqref{eq:Tmb}, respectively. In this sense the denominator family provides the natural normalization needed to see nontrivial $p$-adic shadows.
\end{remark}

\begin{corollary}\label{cor:padic-q3}
Let $p$ be an odd prime. Then
\begin{equation}\label{eq:padic-q3}
p\sum_{r=0}^{(p-1)/2}\frac{\binom{2r}{r}}{4^r(2r+1)}
\equiv (-1)^{(p-1)/2}\pmod{p^2}.
\end{equation}
\end{corollary}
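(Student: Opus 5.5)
The natural route is to specialize \eqref{eq:q3-micro} of Corollary~\ref{cor:mock-specializations} to $a=1$ and $n=p$, and then let $q\to 1$, exactly as in the proofs of Corollaries~\ref{cor:padic-k} and~\ref{cor:padic-q}. Put $m=(p-1)/2$. At $a=1$ the numerator is $(q;q^2)_j^2$, and one factor $(q;q^2)_j$ cancels against $(q;q^2)_{j+1}=(q;q^2)_j(1-q^{2j+1})$. So the specialization of \eqref{eq:q3-micro} reads
\[
\sum_{j=0}^{m} q^{2j}\frac{(q;q^2)_j}{(q^2;q^2)_j}\cdot\frac{1-q^p}{1-q^{2j+1}}
\equiv (-1)^m q^{m^2+2m}\pmod{\Phi_p(q)^2}.
\]

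The first thing to check is that this congruence is meaningful in the paper's sense. The term $j=m$ has $1-q^{2m+1}=1-q^p$ in the denominator, and this is precisely the factor cancelled by the prefactor $1-q^p$; for that term the ratio is $1$. For $j<m$ the ratio $(1-q^p)/(1-q^{2j+1})$ has denominator coprime to $\Phi_p(q)$, since $2j+1<p$. The remaining denominators $(q^2;q^2)_j$ with $2j\le p-1$ are also coprime to $\Phi_p(q)$. Hence, after the cancellation, every summand is a rational function whose denominator is prime to $\Phi_p(q)$, and the Corollary's passage to $a=1$ applies. I take this passage as given, since it is stated in Corollary~\ref{cor:mock-specializations}.

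Next comes the limit $q\to1$, using the same asymptotics as before:
\[
\frac{(q;q^2)_j}{(q^2;q^2)_j}\to\frac{\binom{2j}{j}}{4^j},\qquad \frac{1-q^p}{1-q^{2j+1}}\to\frac{p}{2j+1}.
\]
With these, the left-hand side tends to $p\sum_{r=0}^{m}\binom{2r}{r}/(4^r(2r+1))$, and the right-hand side tends to $(-1)^m$. This gives \eqref{eq:padic-q3}.

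The main obstacle is making the limit rigorous rather than heuristic. To do so, I would write each summand as a quotient of products of $q$-integers $[k]_q=(1-q^k)/(1-q)$, after cancelling equal numbers of factors $1-q$ in numerator and denominator. The denominators are then products of $[2i]_q$ and $[2j+1]_q$ with all indices $<p$, and these evaluate at $q=1$ to integers prime to $p$. Clearing denominators gives a polynomial congruence $A(q)\equiv B(q)C(q)\pmod{\Phi_p(q)^2}$ in $\mathbb{Z}[q]$. Evaluating at $q=1$, where $\Phi_p(1)=p$, yields a congruence modulo $p^2$ between $p$-integral rationals. This is exactly the claim.
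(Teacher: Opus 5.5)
Your proposal is correct and matches the paper's proof: specialize \eqref{eq:q3-micro} at $a=1$, $n=p$, cancel $(q;q^2)_r$ against $(q;q^2)_{r+1}=(q;q^2)_r(1-q^{2r+1})$, and let $q\to1$ using $\frac{1-q^p}{1-q^{2r+1}}\to\frac{p}{2r+1}$ and $\frac{(q;q^2)_r}{(q^2;q^2)_r}\to\binom{2r}{r}/4^r$. Your additional remarks supply details the paper leaves implicit: the $j=m$ denominator $1-q^p$ is cancelled by the prefactor, and the $q\to1$ step is justified by writing the summands as quotients of $q$-integers $[k]_q$ with $k<p$, so the common denominator is prime to $p$ at $q=1$.
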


\begin{proof}
Let $m=(p-1)/2$. Specializing Corollary~\ref{cor:mock-specializations} to $a=1$ and $n=p$ gives
\[
(1-q^p)\sum_{r=0}^{m} q^{2r}\frac{(q;q^2)_r^2}{(q^2;q^2)_r(q;q^2)_{r+1}}
\equiv (-1)^m q^{m^2+2m}
\pmod{\Phi_p(q)^2}.
\]
Since $(q;q^2)_{r+1}=(q;q^2)_r(1-q^{2r+1})$, the left-hand side equals
\[
(1-q^p)\sum_{r=0}^{m} q^{2r}\frac{(q;q^2)_r}{(q^2;q^2)_r(1-q^{2r+1})}.
\]
Now
\[
\frac{1-q^p}{1-q^{2r+1}}\to \frac{p}{2r+1},
\qquad
\frac{(q;q^2)_r}{(q^2;q^2)_r}\to \frac{\binom{2r}{r}}{4^r},
\]
so the left-hand side tends to the left-hand side of~\eqref{eq:padic-q3}, while the right-hand side tends to $(-1)^m$. As before, $\Phi_p(1)=p$, so the modulus $\Phi_p(q)^2$ becomes $p^2$.
\end{proof}

\begin{remark}[The case $k=1$ and Theorem~1.6 of Guo--Zeng]\label{rem:k1-gz}
Let $p$ be an odd prime and write $m=(p-1)/2$. Then~\eqref{eq:Fk-phi2-new} with $k=1$ gives
\[
\sum_{j=0}^{m} q^{2j}\frac{(q;q^2)_j^2}{(q^2;q^2)_j^2}
\equiv (-1)^m q^{m(m+1)}
\pmod{\Phi_p(q)^2}.
\]
Replacing $q$ by $q^{-1}$ and using
\[
(q^{-1};q^{-2})_j=(-1)^j q^{-j^2}(q;q^2)_j,
\qquad
(q^{-2};q^{-2})_j=(-1)^j q^{-j(j+1)}(q^2;q^2)_j,
\]
one finds that
\[
q^{-2j}\frac{(q^{-1};q^{-2})_j^2}{(q^{-2};q^{-2})_j^2}
=
\frac{(q;q^2)_j^2}{(q^2;q^2)_j^2}.
\]
Since $\Phi_p(q^{-1})$ differs from $\Phi_p(q)=[p]$ by a unit, the inverted form of the above congruence is exactly
\[
\sum_{j=0}^{m}\frac{(q;q^2)_j^2}{(q^2;q^2)_j^2}
\equiv (-1)^m q^{-m(m+1)}
\pmod{[p]^2},
\]
which is Theorem~1.6 of~\cite{guo-zeng} in the case $s=0$.
\end{remark}

\begin{remark}[Zero congruences in the $q^{2k}$-family]\label{rem:zero-family}
For fixed $k\ge 2$ and all odd $n\ge 2k-1$, the numerator $(q^{2k-n-1};q^2)_m$ contains the factor $1-q^0$, so the right-hand side of~\eqref{eq:Fk-phi2-new} vanishes identically. Thus for all such $n$ we obtain the zero congruence
\[
\sum_{j=0}^{(n-1)/2} q^{2j}\frac{(q;q^2)_j^2}{(q^2;q^2)_j(q^{2k};q^2)_j}
\equiv 0 \pmod{\Phi_n(q)^2}.
\]
This vanishing phenomenon is reminiscent of the zero-type families proved by Guo and Zeng in Theorem~1.3 and Corollary~1.4 of~\cite{guo-zeng}. Finally, letting $k\to\infty$ in~\eqref{eq:Fk-micro-new} recovers the base companion congruence~\eqref{eq:b0-micro}.
\end{remark}

\section{Proof of Theorem 1 and Corollary~1}\label{sec:proof-phi}

\begin{proof}[Proof of Theorem~\ref{thm:finite-theta}]
Following the recurrence-comparison viewpoint used by Guo and Liu in their proof of Lemma~2.2 of~\cite{guo-liu}, we show that the two sides of~\eqref{eq:finite-theta} satisfy the same recurrence and initial value.

Write
\begin{align*}
L_m(q)&:=\sum_{j=0}^{m} q^{2j+1}\frac{(q^{-2m},q^{2m+2};q^2)_j}{(-q^2;q^2)_j},\\
R_m(q)&:=(-1)^m q^{2m(m+1)+1}\sum_{r=-m}^{m}(-1)^r q^{-2r^2},
\end{align*}
so that~\eqref{eq:finite-theta} is the identity $L_m(q)=R_m(q)$.

For $m\ge 1$ and $0\le j\le m$, set
\begin{align*}
F_{m,j}(q)&:=q^{2j+1}\frac{(q^{-2m},q^{2m+2};q^2)_j}{(-q^2;q^2)_j},\\
G_{m,j}(q)&:=q^{2m+1}(1+q^{2j})\frac{(q^{-2m},q^{2m};q^2)_j}{(-q^2;q^2)_j}.
\end{align*}
We claim that
\begin{equation}\label{eq:FG-telescope}
F_{m,j}(q)+q^{4m}F_{m-1,j}(q)=G_{m,j}(q)-G_{m,j+1}(q).
\end{equation}
Indeed,
\begin{align*}
F_{m,j}(q)+q^{4m}F_{m-1,j}(q)
&=
q^{2j+1}\frac{(q^{-2m},q^{2m};q^2)_j}{(-q^2;q^2)_j}\\
&\qquad\times
\left(
\frac{1-q^{2m+2j}}{1-q^{2m}}
+
q^{4m}\frac{1-q^{-2m+2j}}{1-q^{-2m}}
\right)\\
&=
q^{2j+1}\frac{(q^{-2m},q^{2m};q^2)_j}{(-q^2;q^2)_j}
\bigl(q^{4m}+q^{2m}+1-q^{2m+2j}\bigr),
\end{align*}
while
\begin{align*}
G_{m,j}(q)-G_{m,j+1}(q)
&=
q^{2m+1}\frac{(q^{-2m},q^{2m};q^2)_j}{(-q^2;q^2)_j}\\
&\qquad\times\Bigl((1+q^{2j})-(1-q^{-2m+2j})(1-q^{2m+2j})\Bigr)\\
&=
q^{2j+1}\frac{(q^{-2m},q^{2m};q^2)_j}{(-q^2;q^2)_j}
\bigl(q^{4m}+q^{2m}+1-q^{2m+2j}\bigr).
\end{align*}
This proves~\eqref{eq:FG-telescope}.

Note that $F_{m-1,m}(q)=0$ and $G_{m,m+1}(q)=0$. Summing~\eqref{eq:FG-telescope} over $j=0,\dots,m$, we find
\[
L_m(q)+q^{4m}L_{m-1}(q)
=
\sum_{j=0}^{m}\bigl(G_{m,j}(q)-G_{m,j+1}(q)\bigr)
=
G_{m,0}(q)
=
2q^{2m+1}.
\]
Hence
\begin{equation}\label{eq:L-recurrence}
L_m(q)=-q^{4m}L_{m-1}(q)+2q^{2m+1},
\qquad L_0(q)=q.
\end{equation}

Now put
\[
\Theta_m(q):=\sum_{r=-m}^{m}(-1)^r q^{-2r^2}.
\]
Then
\[
\Theta_m(q)-\Theta_{m-1}(q)=2(-1)^m q^{-2m^2},
\]
and therefore
\[
R_m(q)+q^{4m}R_{m-1}(q)
=
(-1)^m q^{2m(m+1)+1}\bigl(\Theta_m(q)-\Theta_{m-1}(q)\bigr)
=
2q^{2m+1}.
\]
Together with $R_0(q)=q$, this shows that $R_m(q)$ satisfies the same recurrence~\eqref{eq:L-recurrence} and the same initial value as $L_m(q)$. Thus $L_m(q)=R_m(q)$ for all $m\ge 0$.
\end{proof}

\begin{proof}[Proof of Corollary~\ref{cor:microS}]
Let
\begin{align*}
F_m(a)&:=\sum_{j=0}^{m}\frac{q^{2j+1}(aq,q/a;q^2)_j}{(-q^2;q^2)_j},\\
\Theta_m(q)&:=(-1)^m q^{2m(m+1)+1}\sum_{r=-m}^{m}(-1)^r q^{-2r^2}.
\end{align*}
By Theorem~\ref{thm:finite-theta},
\[
F_m(q^n)=\Theta_m(q).
\]
Since the summand is invariant under $a\mapsto a^{-1}$, we also have
\[
F_m(q^{-n})=F_m(q^n)=\Theta_m(q).
\]
Now $F_m(a)$ is a Laurent polynomial in $a$, so
\[
P_m(a):=a^m\bigl(F_m(a)-\Theta_m(q)\bigr)\in \Q(q)[a].
\]
Because $P_m(q^n)=P_m(q^{-n})=0$, the polynomial $P_m(a)$ is divisible by
\[
(a-q^n)(a-q^{-n})
\]
in $\Q(q)[a]$. Since $a^m$ is a unit in $\Q(q)[a,a^{-1}]$, it follows that
\[
F_m(a)-\Theta_m(q)\equiv 0 \pmod{(a-q^n)(a-q^{-n})}.
\]
Finally,
\[
(a-q^n)(a-q^{-n})=-q^{-n}(a-q^n)(1-aq^n),
\]
so the two moduli differ by a unit in $\Q(q)$. This proves~\eqref{eq:micro-S}. Specializing~\eqref{eq:micro-S} at $a=q^n$ recovers Theorem~\ref{thm:finite-theta}, so the two statements are equivalent.
\end{proof}

\section{Proof of Theorem 2 and Corollary~2}\label{sec:proof-nu}

\begin{proof}[Proof of Theorem~\ref{thm:nu-finite-theta}]
Write
\begin{align*}
L_m(q)&:=(1+q^{2m+1})\sum_{j=0}^{m} q^{2j+1}
\frac{(q^{-2m},q^{2m+2};q^2)_j}{(-q;q^2)_{j+1}},\\
R_m(q)&:=(-1)^m q^{(m+1)(2m+1)}
\sum_{r=-m}^{m} (-1)^r q^{-2r^2-r}.
\end{align*}
Then~\eqref{eq:nu-finite-theta} is exactly the identity $L_m(q)=R_m(q)$.

For $m\ge 1$ and $0\le j\le m$, define
\begin{align*}
F_{m,j}(q)&:=(1+q^{2m+1})q^{2j+1}
\frac{(q^{-2m},q^{2m+2};q^2)_j}{(-q;q^2)_{j+1}},\\
G_{m,j}(q)&:=q^{2m+1}(1+q^{2m})
\frac{(q^{-2m},q^{2m};q^2)_j}{(-q;q^2)_j}.
\end{align*}
We claim that
\begin{equation}\label{eq:nu-telescope}
F_{m,j}(q)+q^{4m+1}F_{m-1,j}(q)=G_{m,j}(q)-G_{m,j+1}(q).
\end{equation}
Indeed,
\begin{align*}
F_{m,j}(q)+q^{4m+1}F_{m-1,j}(q)
&=
q^{2j+1}\frac{(q^{-2m},q^{2m};q^2)_j}{(-q;q^2)_{j+1}}\\
&\quad\times\Biggl(
\frac{(1+q^{2m+1})(1-q^{2m+2j})}{1-q^{2m}}\\
&\qquad\qquad
+\frac{q^{4m+1}(1+q^{2m-1})(1-q^{-2m+2j})}{1-q^{-2m}}
\Biggr).
\end{align*}
Since
\[
\frac{1}{1-q^{-2m}}=-\frac{q^{2m}}{1-q^{2m}},
\]
the factor in parentheses equals
\[
\begin{aligned}
\frac{(1+q^{2m+1})(1-q^{2m+2j})}{1-q^{2m}}
&- \frac{q^{6m+1}(1+q^{2m-1})(1-q^{-2m+2j})}{1-q^{2m}}\\
&= (1+q^{2m})\bigl(1+q^{2m+1}+q^{4m}-q^{2m+2j}\bigr).
\end{aligned}
\]
and therefore
\begin{equation}\label{eq:nu-left-common}
\begin{aligned}
F_{m,j}(q)+q^{4m+1}F_{m-1,j}(q)
&=
q^{2j+1}\frac{(q^{-2m},q^{2m};q^2)_j}{(-q;q^2)_{j+1}}\\
&\quad\times(1+q^{2m})\bigl(1+q^{2m+1}+q^{4m}-q^{2m+2j}\bigr).
\end{aligned}
\end{equation}

On the other hand,
\begin{align*}
G_{m,j}(q)-G_{m,j+1}(q)
&=
q^{2m+1}(1+q^{2m})
\frac{(q^{-2m},q^{2m};q^2)_j}{(-q;q^2)_j}\\
&\quad\times
\left(
1-\frac{(1-q^{-2m+2j})(1-q^{2m+2j})}{1+q^{2j+1}}
\right).
\end{align*}
Using $(-q;q^2)_{j+1}=(-q;q^2)_j(1+q^{2j+1})$, this becomes
\[
q^{2m+1}(1+q^{2m})
\frac{(q^{-2m},q^{2m};q^2)_j}{(-q;q^2)_{j+1}}
\Bigl((1+q^{2j+1})-(1-q^{-2m+2j})(1-q^{2m+2j})\Bigr).
\]
Now
\[
(1+q^{2j+1})-(1-q^{-2m+2j})(1-q^{2m+2j})
=
q^{-2m+2j}(1+q^{2m+1}+q^{4m}-q^{2m+2j}),
\]
so
\begin{equation}\label{eq:nu-right-common}
G_{m,j}(q)-G_{m,j+1}(q)
=
q^{2j+1}\frac{(q^{-2m},q^{2m};q^2)_j}{(-q;q^2)_{j+1}}
(1+q^{2m})(1+q^{2m+1}+q^{4m}-q^{2m+2j}).
\end{equation}
Comparing~\eqref{eq:nu-left-common} and~\eqref{eq:nu-right-common} proves~\eqref{eq:nu-telescope}.

Now $F_{m-1,m}(q)=0$ and $G_{m,m+1}(q)=0$. Summing~\eqref{eq:nu-telescope} over $j=0,\dots,m$, we obtain
\[
L_m(q)+q^{4m+1}L_{m-1}(q)
=
\sum_{j=0}^{m}\bigl(G_{m,j}(q)-G_{m,j+1}(q)\bigr)
=
G_{m,0}(q)
=
q^{2m+1}(1+q^{2m}).
\]
Hence
\begin{equation}\label{eq:nu-L-recurrence}
L_m(q)=-q^{4m+1}L_{m-1}(q)+q^{2m+1}(1+q^{2m}),
\qquad
L_0(q)=q.
\end{equation}

Now set
\[
\Psi_m(q):=\sum_{r=-m}^{m}(-1)^r q^{-2r^2-r}.
\]
Then
\[
\Psi_m(q)-\Psi_{m-1}(q)
=
(-1)^m\bigl(q^{-2m^2-m}+q^{-2m^2+m}\bigr)
=
(-1)^m q^{-2m^2-m}(1+q^{2m}).
\]
Therefore
\begin{align*}
R_m(q)+q^{4m+1}R_{m-1}(q)
&=
(-1)^m q^{(m+1)(2m+1)}\bigl(\Psi_m(q)-\Psi_{m-1}(q)\bigr)\\
&=
q^{2m+1}(1+q^{2m}).
\end{align*}
Together with $R_0(q)=q$, this shows that $R_m(q)$ satisfies the same recurrence~\eqref{eq:nu-L-recurrence} and the same initial value as $L_m(q)$. Thus $L_m(q)=R_m(q)$ for all $m\ge 0$.
\end{proof}

\begin{proof}[Proof of Corollary~\ref{cor:nu-micro}]
Let
\[
F_m(a):=(1+q^n)\sum_{j=0}^{m} q^{2j+1}\frac{(aq,q/a;q^2)_j}{(-q;q^2)_{j+1}}
\]
and
\[
\Theta_m(q):=
(-1)^m q^{(m+1)(2m+1)}
\sum_{r=-m}^{m} (-1)^r q^{-2r^2-r}.
\]
By Theorem~\ref{thm:nu-finite-theta},
\[
F_m(q^n)=\Theta_m(q).
\]
Since the summand is invariant under $a\mapsto a^{-1}$, we also have
\[
F_m(q^{-n})=F_m(q^n)=\Theta_m(q).
\]
Now $F_m(a)$ is a Laurent polynomial in $a$, so
\[
P_m(a):=a^m\bigl(F_m(a)-\Theta_m(q)\bigr)\in \Q(q)[a].
\]
Because $P_m(q^n)=P_m(q^{-n})=0$, the polynomial $P_m(a)$ is divisible by
\[
(a-q^n)(a-q^{-n}).
\]
Since
\[
(a-q^n)(a-q^{-n})=-q^{-n}(a-q^n)(1-aq^n),
\]
the two moduli differ by a unit in $\Q(q)$, and the stated congruence follows.
\end{proof}

\begin{remark}[Harmless $q\to 1$ limits]\label{rem:harmless-limits}
Letting $q\to 1$ in either Theorem~\ref{thm:finite-theta} or Theorem~\ref{thm:nu-finite-theta} yields only the trivial identity $1=1$. Thus the nontrivial $p$-adic shadows in this paper come not from the direct limits of the finite-theta identities, but from Theorem~\ref{thm:free-b}, Corollaries~\ref{cor:mock-specializations}--\ref{cor:aux-specializations}, and the two-color specializations recorded in Section~\ref{sec:twocolor}.
\end{remark}

\section{Proof of Theorem 3 and Corollary 3}\label{sec:proof-Tfamily}

We now prove Theorem~\ref{thm:free-b} and Corollary~\ref{cor:Tmb-micro}, which underlie Corollaries~\ref{cor:mock-specializations} and~\ref{cor:aux-specializations}.

\begin{proof}[Proof of Theorem~\ref{thm:free-b}]
Set $Q=q^2$. Then
\[
\T_m(q,q^n;b)
=\sum_{j=0}^{m}\frac{(Q^{-m},Q^{m+1};Q)_j}{(Q,b;Q)_j}Q^j
={}_2\phi_1\!\left(\begin{matrix}Q^{-m},\,Q^{m+1}\\b\end{matrix};\,Q,Q\right).
\]
A classical terminating ${}_2\phi_1$ summation (see, for example,~\cite[eq.~(1.5.1)]{gasper-rahman}) gives
\[
{}_2\phi_1\!\left(\begin{matrix}Q^{-m},\,A\\C\end{matrix};\,Q,Q\right)
=A^m\frac{(C/A;Q)_m}{(C;Q)_m}
\qquad (m\in\mathbb{N}).
\]
Applying this with $A=Q^{m+1}$ and $C=b$ yields
\[
\T_m(q,q^n;b)
=Q^{m(m+1)}\frac{(bQ^{-m-1};Q)_m}{(b;Q)_m}
=q^{2m(m+1)}\frac{(bq^{-n-1};q^2)_m}{(b;q^2)_m}.
\]
The case $a=q^{-n}$ is identical, since the summand in~\eqref{eq:Tmb} is invariant under $a\mapsto a^{-1}$.
\end{proof}

\begin{proof}[Proof of Corollary~\ref{cor:Tmb-micro}]
By Theorem~\ref{thm:free-b}, the difference in~\eqref{eq:Tmb-micro} vanishes at $a=q^n$ and $a=q^{-n}$. For fixed $q$ and $b$, it is a Laurent polynomial in $a$, and multiplying by $a^m$ clears negative powers. Symmetry under $a\mapsto a^{-1}$ then shows divisibility by $(a-q^n)(a-q^{-n})$. This differs from $(a-q^n)(1-aq^n)$ only by the unit $-q^{-n}$.
\end{proof}

\section{Proofs for Corollaries 4-5}\label{sec:proof-cor45}

\begin{proof}[Proof of Corollary~\ref{cor:mock-specializations}]
The three congruences follow from Corollary~\ref{cor:Tmb-micro} by the specializations $b=0$, $q^3$, and $-q^2$, respectively.

For $b=0$ we use $(0;q^2)_j=1$ and obtain~\eqref{eq:b0-micro}. Setting $a=1$ yields
\begin{equation}\label{eq:b0-phi2}
\sum_{j=0}^{m} q^{2j}\frac{(q;q^2)_j^2}{(q^2;q^2)_j}
\equiv q^{2m(m+1)}
\pmod{\Phi_n(q)^2}.
\end{equation}

For $b=q^3$ we use
\[
(q;q^2)_{j+1}=(1-q)(q^3;q^2)_j
\]
and
\[
\frac{(q^{2-n};q^2)_m}{(q^3;q^2)_m}
=(-1)^m q^{-m^2}\frac{1-q}{1-q^n}.
\]
Hence Theorem~\ref{thm:free-b} gives the exact evaluation
\begin{equation}\label{eq:q3-exact-balanced}
(1-q^n)\sum_{j=0}^{m} q^{2j}\frac{(q^{1-n},q^{n+1};q^2)_j}{(q^2;q^2)_j(q;q^2)_{j+1}}
=
(-1)^m q^{m^2+2m},
\end{equation}
and Corollary~\ref{cor:Tmb-micro} yields~\eqref{eq:q3-micro}. Setting $a=1$ gives
\begin{equation}\label{eq:q3-phi2}
(1-q^n)\sum_{j=0}^{m} q^{2j}\frac{(q;q^2)_j^2}{(q^2;q^2)_j(q;q^2)_{j+1}}
\equiv (-1)^m q^{m^2+2m}
\pmod{\Phi_n(q)^2}.
\end{equation}

For $b=-q^2$ we use
\[
q^{2m(m+1)}\frac{(-q^{1-n};q^2)_m}{(-q^2;q^2)_m}=q^{m(m+1)}
\]
and obtain~\eqref{eq:bminusq2-micro}. Setting $a=1$ gives
\begin{equation}\label{eq:bminusq2-phi2}
\sum_{j=0}^{m} q^{2j}\frac{(q;q^2)_j^2}{(q^2;q^2)_j(-q^2;q^2)_j}
\equiv q^{m(m+1)}
\pmod{\Phi_n(q)^2}.
\end{equation}
\end{proof}

\begin{proof}[Proof of Corollary~\ref{cor:aux-specializations}]
These are the specializations $b=q$ and $b=-q$ of Corollary~\ref{cor:Tmb-micro}. We use
\[
\frac{(q^{-n};q^2)_m}{(q;q^2)_m}
=(-1)^m q^{-m^2-2m}\frac{1-q^n}{1-q},
\qquad
\frac{(-q^{-n};q^2)_m}{(-q;q^2)_m}
=q^{-m^2-2m}\frac{1+q^n}{1+q}.
\]
Setting $a=1$ yields
\begin{align}
\sum_{j=0}^{m} q^{2j}\frac{(q;q^2)_j}{(q^2;q^2)_j}
&\equiv (-1)^m q^{m^2}\frac{1-q^n}{1-q}
\pmod{\Phi_n(q)^2},\label{eq:bq-phi2}\\
\sum_{j=0}^{m} q^{2j}\frac{(q;q^2)_j^2}{(q^2;q^2)_j(-q;q^2)_j}
&\equiv q^{m^2}\frac{1+q^n}{1+q}
\pmod{\Phi_n(q)^2}.\label{eq:bminusq-phi2}
\end{align}
Here $(q;q^2)_m$ and $(-q;q^2)_m$ are coprime to $\Phi_n(q)$ because $n$ is odd.
\end{proof}

\section{Outlook on the two-color sequences}\label{sec:outlook}

We conclude by returning to the broader two-color families introduced in~\cite{andrews-elbachraoui}:
\begin{align}
\Ck(q)&:=\sum_{n\ge 0} q^{2n}\frac{(q;q^2)_n^2}{(-q^2;q^2)_n(-q^{2k};q^2)_n},\label{eq:Ck}\\
\Gk(q)&:=\sum_{n\ge 0} q^{2n}\frac{(q;q^2)_n(-q;q^2)_n}{(q^2;q^2)_n(-q^{2k};q^2)_n},\label{eq:Gk}\\
\Hk(q)&:=\sum_{n\ge 0} q^{2n}\frac{(q;q^2)_n(-q;q^2)_n}{(q^2;q^2)_n(q^{2k};q^2)_n}.\label{eq:Hk}
\end{align}
As $k\to\infty$, the family~\eqref{eq:Ck} tends to
\[
\sum_{n\ge 0} q^{2n}\frac{(q;q^2)_n^2}{(-q^2;q^2)_n},
\]
which differs from~\eqref{eq:S} only by the initial factor $q$. It is therefore natural to ask for microscope congruences for truncated versions of~\eqref{eq:Ck}--\eqref{eq:Hk}, ideally with finite theta evaluations at $a=q^{\pm n}$ analogous to Theorems~\ref{thm:finite-theta},~\ref{thm:nu-finite-theta}, and~\ref{thm:free-b}.

\begin{conjecture}\label{conj:broader}
Fix $k\ge 1$ and let $n=2m+1$ be odd. For each of the series in~\eqref{eq:Ck}--\eqref{eq:Hk}, define a natural $a$-extension by replacing $(q;q^2)_j^2$ with $(aq,q/a;q^2)_j$ and $(q;q^2)_j(-q;q^2)_j$ with a suitable two-parameter analogue. Then the corresponding truncations at $j=m$ should satisfy microscopic congruences modulo $(a-q^n)(1-aq^n)$ whose specializations at $a=q^{\pm n}$ are expressible in terms of finite theta polynomials or short linear combinations thereof.
\end{conjecture}

\begin{remark}
At present we do not have closed forms analogous to~\eqref{eq:Tmb-exact} for the $(-q^2;q^2)_j$ denominator families in~\eqref{eq:Ck} and~\eqref{eq:Gk}. The proved behavior of the $\phi$-type and $\nu$-type theorems suggests that such formulas, if they exist, may involve finite theta sums rather than a single $q$-Pochhammer quotient. This is the main structural reason for separating the paper into the finite-theta part of Sections~\ref{sec:proof-phi} and~\ref{sec:proof-nu} and the exactly summable denominator family based on Theorem~\ref{thm:free-b}, Corollaries~\ref{cor:mock-specializations}--\ref{cor:aux-specializations}, and Section~\ref{sec:twocolor}. More conceptually, Conjecture~\ref{conj:broader} should be viewed as part of a broader bridge between mock theta functions and creative microscoping. Ramanujan’s original definition is local at roots of unity, and creative microscoping is local at roots of unity as well. The finite theta polynomials appearing in this paper may therefore be viewed as local theta corrections for truncated mock-theta-type sums. From that viewpoint, it would be more surprising for the remaining two-color families to lie permanently outside the scope of a mock-theta $q$-microscope than for them to admit more complicated finite theta descriptions.
\end{remark}

\medskip
\noindent\textit{Data Availability Statement.} No data was used for the research described in the article.

\end{document}